\newtheorem{theorem}{Theorem}[section]
\newtheorem{lemma}[theorem]{Lemma}
\newtheorem{corollary}[theorem]{Corollary}
\theoremstyle{definition}
\newtheorem{definition}[theorem]{Definition}
\theoremstyle{remark}
\newtheorem{remark}[theorem]{Remark}
\newcommand{\range}{\mathop{\operator@font range}}
\newcommand{\rank}{\mathop{\operator@font rank}}
\newcommand{\dom}{\mathop{\operator@font dom}}
\newcommand{\Real}{\mathop{\operator@font Re}}
\newcommand{\cont}{\mathop{\operator@font Cont_w}}
\newcommand{\alg}{\mathop{\operator@font Alg\mathcal{N}}}
\newcommand{\tr}{\mathop{\operator@font tr}}
\newcommand{\sgn}{\mathop{\operator@font sgn}}
\newcommand{\kn}{\mathop{\operator@font \mathcal{K}(\mathcal{N})}}
\newcommand{\rad}{\mathop{\operator@font Rad(\mathcal{N})}}
\begin{document}
\title{Topological Radicals of Nest Algebras}
\author{G. Andreolas}
\author{M. Anoussis}
\address{Department of Mathematics, University of the Aegean, 832\,00
Karlovassi, Samos, Greece}
\email{gandreolas@aegean.gr}
\address{Department of Mathematics, University of the Aegean, 832\,00
Karlovassi, Samos, Greece}
\email{mano@aegean.gr}

\begin{abstract}
 Let $\mathcal{N}$ be a nest on a Hilbert space $H$ and $\alg$ the corresponding nest algebra. We determine the
hypocompact  radical of  $\alg$. Other topological radicals are also characterized.
\end{abstract}

\maketitle

\section{Introduction}

 Let $\mathcal{A}$ be a Banach algebra. The  
 \textit{Jacobson 
radical}  of $\mathcal{A}$ 
 is  defined as the intersection of the kernels of the algebraically irreducible representations of  $\mathcal{A}$.
 A topologically irreducible representation of
$\mathcal{A}$ is a continuous homomorphism of $\mathcal{A}$
into the Banach algebra of bounded linear operators
on  a  Banach  space $X$ for which   no  nontrivial,  closed  subspace of $X$ is invariant. 
It has been shown in \cite{dix} that  the intersection of the kernels of these 
representations is in a reasonable sense a new radical that can be strictly smaller than the Jacobson radical.

The theory of topological radicals of Banach algebras originated with Dixon \cite{dix} in order to study this new radical as well as other
radicals associated with various types of representations.

Shulman and Turovskii 
have further developed the theory of topological radicals in a series of papers \cite{shu0,  shu1, shu2, shu3, shu4, shu5} and applied it to the study of various problems of Operator Theory and 
Banach algebras. They introduced many new topological radicals. Among them there are the hypocompact radical, the hypofinte 
radical and the scattered radical. 
 These radicals are closely related to the theory of elementary operators on Banach algebras \cite{bre, shu2}.
 
 Let us  recall Dixon's definition of topological radicals.

\begin{definition}
 A \textit{topological radical} is a map $\mathcal{R}$ associating with each Banach algebra $\mathcal{A}$ a closed ideal 
$\mathcal{R}(\mathcal{A}) \subseteq\mathcal{A}$ such that the following hold.
\begin{enumerate}
 \item $\mathcal{R}(\mathcal{R}(\mathcal{A}))=\mathcal{R}(\mathcal{A})$.
 \item $\mathcal{R}(\mathcal{A}/\mathcal{R}(\mathcal{A}))\!=\!\{0\}$, where $\{0\}$ denotes the zero coset in 
$\mathcal{A}/\mathcal{R}(\mathcal{A})$.
 \item If $\mathcal{A}$, $\mathcal{B}$ are Banach algebras and $\phi:\mathcal{A}\to\mathcal{B}$ is a continuous 
epimorphism, then $\phi(\mathcal{R}(\mathcal{A}))\subseteq \mathcal{R}(\mathcal{B})$.
 \item If $\mathcal{I}$ is a closed ideal of $\mathcal{A}$, then $\mathcal{R}(\mathcal{I})$ is a closed ideal of $\mathcal{A}$ 
and $\mathcal{R}(\mathcal{I})\subseteq \mathcal{R}(\mathcal{A})\cap\mathcal{I}$.
\end{enumerate}
\end{definition}

 An element $a$ of a Banach  
algebra $\mathcal{A}$ is said to be \textit{compact}  if the map $M_{a,a}:\mathcal{A}\rightarrow\mathcal{A}$, $x\mapsto 
axa$ is 
compact. Following Shulman and Turovskii \cite{shu4} we will call a  Banach algebra 
$\mathcal{A}$  \textit{hypocompact}  if any nonzero quotient $\mathcal{A}/\mathcal{J}$ by a closed ideal 
$\mathcal{J}$ contains a nonzero
compact element. Shulman and Turovskii have proved that any Banach  algebra $\mathcal{A}$ has a largest hypocompact 
ideal \cite[Corollary 3.10]{shu4} which is denoted by $\mathcal{R}_{hc}(\mathcal{A})$ and that   the map $\mathcal{A}\to 
\mathcal{R}_{hc}(\mathcal{A})$ is a topological radical \cite[Theorem 3.13]{shu4}. 
 The ideal $\mathcal{R}_{hc}(\mathcal{A})$
 is called the hypocompact  radical of $\mathcal{A}$.

If  $X$ is a Banach space, we shall denote by $\mathcal{B}(X)$ the Banach algebra of all bounded operators on $X$ and by
$\mathcal{K}(X)$ the Banach algebra of all compact operators on $X$. 
Vala has shown in \cite{1964} that if $X$ is a Banach space, an element $a \in \mathcal{B}(X)$ is a compact element if and only if $a \in \mathcal{K}(X)$.  
Since by \cite[Lemma 8.2]{bre} the compact elements are always contained in the hypocompact radical, we obtain  $\mathcal{K}(X)\subseteq 
\mathcal{R}_{hc}(\mathcal{B}(X))$. It follows that if 
   $H$ is a separable Hilbert space,  the hypocompact radical of $\mathcal{B}(H)$ is $\mathcal{K}(H)$. 
 Indeed, the ideal  $\mathcal{K}(H)$ is the only proper ideal of $\mathcal{B}(H)$ while the Calkin algebra 
$\mathcal{B}(H)/\mathcal{K}(H)$ does not have any non-zero compact element \cite[section 5]{fs}.
 
  Shulman and Turovskii observe   in \cite[p. 298]{shu4} that  there exist Banach spaces $X$, such that the hypocompact radical
 $\mathcal{R}_{hc}(\mathcal{B}(X))$ of $\mathcal{B}(X))$ contains all the weakly compact
 operators and  contains strictly 
 the ideal of compact operators $\mathcal{K}(X)$.  

  Argyros and Haydon construct in \cite{ah}   a Banach space $X$ 
 such that every operator in $\mathcal{B}(X)$ is  a scalar 
multiple of the identity plus a compact operator. In that case,  it follows that $\mathcal{B}(X)/\mathcal{
K}(X)$ is finite-dimensional and hence 
$\mathcal{R}_{hc}((\mathcal{B}(X))=\mathcal{B}(X)$. 

In this paper we characterize the hypocompact radical of a nest algebra.
Nest algebras form a class
of non-selfadjoint operator algebras that generalize the block upper triangular matrices to an infinite dimensional 
Hilbert
space context. They were introduced by Ringrose in \cite{ring} and since then, they have been studied by many authors. 
The monograph of Davidson \cite{dav}
is recommended as a reference.

The  ideal structure  of nest algebras has an important part in the development of the  theory of nest algebras.  
Ringrose characterized the Jacobson radical of a nest algebra in \cite[Theorem 5.3]{ring}. 
Moreover, it  follows from \cite[Theorems 4.9 and 5.3]{ring} that the intersection of the kernels of  the  
topologically 
irreducible representations of a nest algebra coincides with the Jacobson radical. 

We introduce now some definitions and notations that we will use in the sequel. 
A nest $\mathcal{N}$ is a totally
ordered family of closed subspaces of a Hilbert space $H$ containing $\{0\}$ and $H$, which is closed under intersection 
and
closed span. If $H$ is a Hilbert space and $\mathcal{N}$ a nest on $H$, then the nest algebra $\alg$ is the algebra of 
all
operators $T\in\mathcal{B}(H)$ such that $T(N)\subseteq N$ for all $N\in\mathcal{N}$. We shall usually denote both the 
subspaces belonging to a nest and their corresponding orthogonal projections by the same symbol. If 
$(N_{\lambda})_{\lambda\in\Lambda}$ is a family 
of
subspaces of a Hilbert space, we denote by $\vee \{N_{\lambda}:\lambda\in\Lambda\}$ their closed linear span and by
$\wedge\{N_{\lambda}:\lambda\in\Lambda\}$ their intersection. If $\mathcal{N}$ is a nest and $N\in\mathcal{N}$, then
$N_-=\vee\{N^{\prime}\in\mathcal{N}:N^{\prime}<N\}$.
Similarly we define $N_+=\wedge\{N^{\prime}\in\mathcal{N}:N^{\prime}>N\}$. The subspaces $N\cap N_{-}^{\perp}$ are 
called
the \textit{atoms} of $\mathcal{N}$. 
If $e,f$ are elements of a
Hilbert space $H$, we denote by $e\otimes f$ the rank one operator on $H$ defined by
$(e\otimes f)(h)=\langle h,e\rangle f.$
We shall
frequently use the fact that a rank one operator $e\otimes f$ belongs to a nest algebra, $\alg$, if and only if there 
exists an
element $N$ of $\mathcal{N}$ such that $e\in N_-^{\perp}$ and $f\in N$ \cite[Lemmas 2.8 and 3.7]{dav}.
Throughout the paper we denote by $\mathcal{N}$ a nest 
acting
on a Hilbert space $H$ and by $\mathcal{K}(\mathcal{N})$ the ideal of compact operators of $\alg$.
In addition, all ideals are considered to be closed. 
The radical of a nest algebra $\alg$ will be denoted by $\rad$. 
The following is 
\cite[Theorem 5.3]{ring}.

\begin{theorem}[Ringrose's Theorem]\label{ringrose} Let $\mathcal{N}$ be a nest on a Hilbert space $H$. The Jacobson 
radical of $\alg$ coincides with the set of operators 
$a\in\alg$ for which the quantities $\inf\{\|PQ^{\perp}aPQ^{\perp}\|:P\in\mathcal{N}; P>Q\}$ and 
$\inf\{\|QP^{\perp}aQP^{\perp}\|:P\in\mathcal{N}; P<Q\}$ are zero for all $Q\in\mathcal{N}$.
\end{theorem}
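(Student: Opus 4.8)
The plan is to write $R$ for the set of operators described in the statement and to prove the two inclusions $R\subseteq\rad$ and $\rad\subseteq R$ after first checking that $R$ is a closed two-sided ideal of $\alg$. For the inclusion $R\subseteq\rad$ I would invoke the standard description of the Jacobson radical as the largest ideal all of whose elements are quasinilpotent (equivalently, $a\in\rad$ whenever $xa$ is quasinilpotent for every $x$), so that it suffices to show every $a\in R$ has spectral radius $0$. For $\rad\subseteq R$ I would argue by contraposition, showing that if one of the two infima is positive at some $Q$ then $a$ survives in a quotient of $\alg$ whose radical is zero, whence $a\notin\rad$.

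That $R$ is an ideal rests on one triangularity identity. For $Q<P$ in $\mathcal N$ write $P-Q$ for the projection $PQ^{\perp}$. For $a,b\in\alg$, inserting $I=(P-Q)+(P-Q)^{\perp}$ and using the triangularity of $a$ and $b$ together with $(P-Q)\perp Q$ kills the cross term and yields
\[
(P-Q)\,ba\,(P-Q)=(P-Q)b(P-Q)\cdot(P-Q)a(P-Q),
\]
and symmetrically for $ab$. Hence $\|(P-Q)ba(P-Q)\|\le\|b\|\,\|(P-Q)a(P-Q)\|$, so the relevant infimum for $ba$ (and for $ab$) is dominated by that for $a$; the same monotonicity of compression, applied at $P'=P_{1}\wedge P_{2}$, shows $R$ is closed under sums, and a triangle inequality shows $R$ is norm closed. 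The companion statement for the lower quantities follows by passing to $\mathcal N^{\perp}$ and adjoints.

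For $R\subseteq\rad$, fix $a\in R$ and $\varepsilon>0$. The hypothesis furnishes, around each $Q\in\mathcal N$, projections $P_{Q}^{-}<Q<P_{Q}^{+}$ with $\|(Q-P_{Q}^{-})a(Q-P_{Q}^{-})\|<\varepsilon$ and $\|(P_{Q}^{+}-Q)a(P_{Q}^{+}-Q)\|<\varepsilon$. Since a nest is a complete lattice, it is compact in its order topology, so finitely many of the intervals $(P_{Q}^{-},P_{Q}^{+})$ cover $\mathcal N$; refining their endpoints to a finite partition $0=R_{0}<R_{1}<\dots<R_{m}=I$ in which each block $(R_{i-1},R_{i}]$ lies inside one of the half-intervals $(P_{Q}^{-},Q]$ or $(Q,P_{Q}^{+}]$, monotonicity gives $\|(R_{i}-R_{i-1})a(R_{i}-R_{i-1})\|<\varepsilon$ for all $i$. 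With $E_{i}=R_{i}-R_{i-1}$ the operator $a$ is block upper triangular ($E_{i}aE_{j}=0$ for $i>j$), so expanding $a^{n}=\sum E_{i_{0}}aE_{i_{1}}\cdots aE_{i_{n}}$ over nondecreasing chains, and noting each chain has at most $m-1$ strict increases and hence at least $n-m+1$ diagonal factors of norm $<\varepsilon$, gives $\|a^{n}\|\le p(n)\,\|a\|^{\,m-1}\varepsilon^{\,n-m+1}$ for a fixed polynomial $p$. Taking $n$-th roots and letting $n\to\infty$ yields spectral radius at most $\varepsilon$; as $\varepsilon$ is arbitrary, $a$ is quasinilpotent, and since $R$ is an ideal, $R\subseteq\rad$.

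For $\rad\subseteq R$, assume $a\notin R$; replacing $\mathcal N$ by $\mathcal N^{\perp}$ and $a$ by $a^{*}$ interchanges the two families of quantities, so we may suppose $\inf\{\|(P-Q)a(P-Q)\|:P>Q\}=\delta>0$ for some $Q$. If $Q$ has an immediate successor, so that $E=Q_{+}-Q$ is an atom, the infimum equals $\|EaE\|\ge\delta$; the map $\phi\colon\alg\to\mathcal B(EH)$, $\phi(x)=ExE$, is a continuous epimorphism (multiplicativity is the identity above with $P=Q_{+}$), and $\mathcal B(EH)$ is semisimple, so property (3) of a topological radical gives $\phi(\rad)\subseteq\rad(\mathcal B(EH))=\{0\}$; thus $a\in\rad$ would force $EaE=0$, a contradiction. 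The remaining case, in which $Q$ is approached from above by the nest and no atom is available, is the main obstacle, since a single rank-one multiplier $\eta\otimes\xi\in\alg$ only detects atoms. Here I would use continuity to select $N_{1}>N_{2}>\cdots$ decreasing to $Q$ and unit vectors $\xi_{k}$ with $\|N_{k}a\xi_{k}\|\ge\delta$, shrink the $N_{k}$ so that the rank-one operators $\eta_{k}\otimes\xi_{k+1}$ (with $\eta_{k}=N_{k}a\xi_{k}$) lie in $\alg$ via the membership criterion, and assemble a bounded $b=\sum_{k}\lambda_{k}\,\eta_{k}\otimes\xi_{k+1}\in\alg$ for which $ab$ is a small perturbation of a weighted shift with weights bounded below by a positive constant; then $ab$ has positive spectral radius, so $a\notin\rad$. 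Controlling the overlaps in this infinite selection so that the shift structure genuinely survives is the delicate point of the argument.
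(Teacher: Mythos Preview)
The paper does not prove this theorem at all: Theorem~\ref{ringrose} is stated as a quotation of \cite[Theorem~5.3]{ring} and is used only as a tool, so there is no ``paper's own proof'' to compare against. What you have written is, in outline, the classical Ringrose argument itself.

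Your verification that the interval compressions are multiplicative, $(P-Q)ba(P-Q)=(P-Q)b(P-Q)\,(P-Q)a(P-Q)$, and the compactness/finite-partition argument giving $\|a^{n}\|\le p(n)\|a\|^{m-1}\varepsilon^{\,n-m+1}$ are correct and are exactly how the inclusion $R\subseteq\rad$ is usually obtained. The atom case of the reverse inclusion is also standard and correct.

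The genuine gap is the continuous case of $\rad\subseteq R$, which you yourself flag as ``the delicate point.'' Your shift construction is not yet a proof: with $\eta_k=N_k a\xi_k$ you have no control over where $\eta_k$ lands relative to the later $N_{k+1}$, so the rank-one criterion $\eta_k\in (N')_-^{\perp}$, $\xi_{k+1}\in N'$ for some $N'\in\mathcal N$ is not available from what you have written; moreover, even after refining the $N_k$, the product $ab$ will have off-diagonal terms coming from $a(\eta_k\otimes\xi_{k+1})$ that need not be small, so ``small perturbation of a weighted shift'' requires a further estimate you have not supplied. Ringrose sidesteps this entirely: since the compression $a\mapsto(P-Q)a(P-Q)$ is multiplicative, $i_Q^{+}(a)=\lim_{P\downarrow Q}\|(P-Q)a(P-Q)\|$ is a submultiplicative seminorm, and he shows that the quotient $\alg/\ker i_Q^{+}$ is (after completion) a primitive Banach algebra; then $i_Q^{+}(a)>0$ immediately gives $a\notin\rad$ without any explicit element. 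If you want to keep your constructive route, you must choose the $\xi_k$ in pairwise orthogonal slices $(N_{k}-N_{k+1})H$, pick them so that most of $(N_k-Q)a\xi_k$ already lies in $(N_{k-1}-N_{k})H$ (possible because $(N_k-Q)a(N_k-Q)$ has norm $\ge\delta$ while $(N_{k+1}-Q)a\xi_k$ can be made small by first fixing $\xi_k$ and then choosing $N_{k+1}$ close enough to $Q$), and then verify that the resulting $ab$ dominates a genuine shift with weights bounded below.
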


\section{Main Result}

\begin{lemma} \label{cont}
 Let $\mathcal{N}$ be a nest on a Hilbert space $H$ and $Q\in\mathcal{N}$ such that $Q_-=Q$. Suppose that $a,b\in\alg$ 
such 
that 
$\|QP^{\perp}aQP^{\perp}\|\geq2\varepsilon$ and $\|QP^{\perp}bQP^{\perp}\|\geq2\varepsilon$ for some 
$\varepsilon>0$ and for all $P<Q$, $P\in\mathcal{N}$. Then, there exist orthonormal sequences $(e_n)$, $(f_n)$ such 
that $e_n\otimes 
f_n\in\alg$ and $\|QP^{\perp}a(\sum_{n=1}^{\infty}e_{k_n}\otimes f_{k_n}) bQP^{\perp}\|\geq\varepsilon^2$ for all 
$P<Q$, $P\in\mathcal{N}$ 
and for any strictly increasing sequence $(k_n)\subseteq \mathbb{N}$.
\end{lemma}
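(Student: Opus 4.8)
The plan is to construct the two sequences by an inductive ``capture'' in slices of $Q$, feeding the two factors of the product from the hypotheses on $a$ and on $b$ separately, and then to arrange the geometry so that every off-diagonal term of the resulting series is killed exactly by triangularity. First I would record the elementary identity $a(e\otimes f)b=(b^{*}e)\otimes(af)$. Since $P<Q$ gives $QP^{\perp}=Q-P$, this yields $(Q-P)\,a(e\otimes f)b\,(Q-P)=[(Q-P)b^{*}e]\otimes[(Q-P)af]$, so for $T=\sum_{m}e_{k_m}\otimes f_{k_m}$ the compressed operator is $\sum_{m}[(Q-P)b^{*}e_{k_m}]\otimes[(Q-P)af_{k_m}]$. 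To bound its norm from below it then suffices to produce unit vectors $\xi,\eta\in(Q-P)H$ with $\langle aTb\,\xi,\eta\rangle\ge\varepsilon^{2}$.

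Using $Q_{-}=Q$ I would choose cuts $P_0<R_1<P_1<R_2<P_2<\cdots<Q$ in $\mathcal{N}$, cofinal below $Q$, together with the vectors as follows. At stage $n$ the hypothesis $\|(Q-P_{n-1})a(Q-P_{n-1})\|\ge2\varepsilon$ supplies a unit $x\in(Q-P_{n-1})H$ with $\|(Q-P_{n-1})ax\|\ge2\varepsilon-\delta$; since $(R-P_{n-1})x\to x$ as $R\nearrow Q$, I pick $R_n<Q$ close enough to $Q$ and set $f_n=(R_n-P_{n-1})x/\|(R_n-P_{n-1})x\|$, a unit vector in the slice $(R_n-P_{n-1})H$. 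Because $f_n\in R_nH$ and $a$ preserves $R_n$, the whole mass of $af_n$ above $P_{n-1}$ lies in this slice and $\|(R_n-P_{n-1})af_n\|\ge2\varepsilon-2\delta$. Symmetrically $\|(Q-R_n)b^{*}(Q-R_n)\|=\|(Q-R_n)b(Q-R_n)\|\ge2\varepsilon$ gives a unit $y\in(Q-R_n)H$ with $\|(Q-R_n)b^{*}y\|\ge2\varepsilon-\delta$; choosing $P_n<Q$ near $Q$ and setting $e_n=(P_n-R_n)y/\|(P_n-R_n)y\|$ gives a unit vector in the slice $(P_n-R_n)H$ with $\|(P_n-R_n)b^{*}e_n\|\ge2\varepsilon-2\delta$. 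Taking $N=R_n$ we have $f_n\in R_n$ and $e_n\in(Q-R_n)H\subseteq (R_n)_-^{\perp}$, so each $e_n\otimes f_n\in\alg$; moreover the $f_n$ lie in the pairwise orthogonal slices $(R_n-P_{n-1})H$ and the $e_n$ in the pairwise orthogonal slices $(P_n-R_n)H$, so both sequences are orthonormal and $T$ (being a sum over orthonormal families) is a bounded operator.

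Now I would fix $P<Q$ and a strictly increasing $(k_n)$. By cofinality pick $m_0$ with $P_{k_{m_0}-1}\ge P$ and write $n=k_{m_0}$. Set $u_n=(R_n-P_{n-1})af_n$, $w_n=(P_n-R_n)b^{*}e_n$, and $\eta=u_n/\|u_n\|$, $\xi=w_n/\|w_n\|$; both lie in $(Q-P)H$ since the two slices sit above $P$. Expanding $\langle aTb\,\xi,\eta\rangle=\sum_{m}\langle\xi,b^{*}e_{k_m}\rangle\langle af_{k_m},\eta\rangle$, the diagonal term $m=m_0$ equals $\|w_n\|\,\|u_n\|\ge(2\varepsilon-2\delta)^{2}$. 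Every other term vanishes: if $k_m<n$ then $af_{k_m}\in R_{k_m}H\subseteq P_{n-1}H$ is orthogonal to the slice of $u_n$, so $\langle af_{k_m},\eta\rangle=0$; if $k_m>n$ then $e_{k_m}$ lies above $P_n$, whence $b^{*}e_{k_m}\in P_n^{\perp}H$ is orthogonal to the slice of $w_n$, so $\langle\xi,b^{*}e_{k_m}\rangle=0$. Hence $\|(Q-P)aTb(Q-P)\|\ge\langle aTb\,\xi,\eta\rangle\ge(2\varepsilon-2\delta)^{2}\ge\varepsilon^{2}$ as soon as $\delta\le\varepsilon/2$, uniformly in $P$ and in the subsequence.

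The main difficulty I anticipate is the three-way tension among the requirements: membership $e_n\otimes f_n\in\alg$ forces $e_n$ to sit above $f_n$, orthonormality forces the vectors into disjoint slices, yet the hypotheses only deliver vectors in the tails $(Q-P)H$, not in thin slices. The device that resolves everything simultaneously is to project the tail vectors furnished by the hypotheses into capture slices near $Q$—legitimate precisely because $Q_-=Q$—placing $af_n$ and $b^{*}e_n$ in two disjoint slices; the triangularity of $a$ and of $b$ then annihilates each off-diagonal term exactly, so that no approximate or gliding-hump estimate is required. The remaining points to verify carefully are that enough nest elements exist between consecutive cuts and that a cofinal increasing sequence below $Q$ can be extracted, both of which follow from $Q_-=Q$.
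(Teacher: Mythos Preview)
Your proposal is correct and follows essentially the same strategy as the paper: build $f_n$ in successive slices below $Q$ witnessing the $a$-estimate and $e_n$ in interleaved slices witnessing the $b^*$-estimate, so that $e_n\otimes f_n\in\alg$, and then use the triangularity of $a$ and of $b^*$ to annihilate every off-diagonal term in the compressed sum. The only cosmetic differences are that the paper extracts both slice-sequences as subsequences of a single strictly increasing sequence $(R_n)$ that is SOT-convergent to $Q$ (which handles the cofinality issue you flag at the end automatically) and obtains the final lower bound by computing the norm of the one surviving rank-one summand $S_{i+1}S_i^\perp b^*(e_i)\otimes P_{i+1}P_i^\perp a(f_i)$ directly rather than via test vectors $\xi,\eta$.
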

\begin{proof}
Let $(R_n)_{n\in\mathbb{N}}$ be a strictly increasing sequence that is sot convergent to $Q$.
 We set $P_1=R_1$. Then, $\|QP_1^{\perp}aQP_1^{\perp}\|\geq 2\varepsilon$. We choose a norm one vector $f'_1\in Q$ such 
that $\|QP_1^{\perp}aQP_1^{\perp}(f'_1)\|\geq\frac{3}{2}\varepsilon$. Then, we choose a projection $P_2=R_{k_2}$ 
with $k_2>1$, such that 
$$\|QP_1^{\perp}aQP_1^{\perp}P_2(f'_1)\|\geq\varepsilon.$$
We set $f_1=\frac{1}{\|P_1^{\perp}P_2f'_1\|}P_1^{\perp}P_2f'_1$. Then  $f_1\in 
P_1^{\perp}P_2$, $\|f_1\|=1$ and  
$$\|P_2P_1^{\perp}aP_2P_1^{\perp}(f_1)\|\geq\varepsilon.$$
Suppose that there exist $P_3=R_{k_3},\ldots,P_n=R_{k_n}$, where $k_2<\ldots<k_n$ such that 
$\|P_iP_{i-1}^{\perp}aP_iP_{i-1}^{\perp}(f_{i-1})\|\geq\varepsilon$  for some orthonormal vectors $(f_i)_{i=1}^n$, 
where 
$f_{i-1}\in P_iP_{i-1}^{\perp}$, $i\in\{3,\ldots,n\}$. Given that $\|QP_n^{\perp}aQP_n^{\perp}\|\geq2\varepsilon$, we 
consider the arguments of the first step of the proof to obtain  a 
projection $P_{n+1}=R_{k_{n+1}}$ for some $k_{n+1}>k_n$ and a norm one vector $f_n\in P_{n+1}P_n^{\perp}$ such that
$$\|P_{n+1}P_n^{\perp}aP_{n+1}P_n^{\perp}(f_n)\|\geq\varepsilon.$$
Note that $(P_n)_{n\in\mathbb{N}}$ is sot convergent to $Q$ as a subsequence of $(R_n)_{n\in\mathbb{N}}$.

In the same way, we can find a subsequence $(S_n)_{n\in\mathbb{N}}$ of $(R_n)_{n\in\mathbb{N}}$ such that 
$S_n>P_n$ for all $n\in\mathbb{N}$ and   an orthonormal sequence 
$(e_n)_{n\in\mathbb{N}}\subseteq H$ such that  $e_n\in S_{n+1}S_n^{\perp}$  and
$$\|S_{n+1}S_n^{\perp}b^*S_{n+1}S_n^{\perp}(e_n)\|\geq\varepsilon,$$
 for all 
$n\in\mathbb{N}$. It follows that $e_n\otimes f_n\in\alg$. Let $P\in\mathcal{N}$ and $i\in\mathbb{N}$ such that 
$P_{i+1},S_{i+1}>P$. Then,
\begin{eqnarray*}
 \left\|QP^{\perp}a\left(\sum_{n\in\mathbb{N}}e_n\otimes f_n\right) b QP^{\perp}\right\| &\geq& 
\left\|P_{i+1}P_{i}^{\perp}a\left(\sum_{n\in\mathbb{N}}e_n\otimes f_n\right) bS_{i+1}S_{i}^{\perp}\right\|\\
&=& \left\|\sum_{n\in\mathbb{N}}S_{i+1}S_{i}^{\perp}b^*(e_n)\otimes P_{i+1}P_{i}^{\perp}a(f_n)\right\|\\
&=& \left\|S_{i+1}S_{i}^{\perp}b^*(e_i)\otimes P_{i+1}P_{i}^{\perp}a(f_i)\right\|\\
&=& \left\|S_{i+1}S_{i}^{\perp}b^*(e_i)\right\|\left\|P_{i+1}P_{i}^{\perp}a(f_i)\right\|\\
&\geq& \varepsilon^2.
\end{eqnarray*}
The proof is identical  for any strictly increasing sequence $(k_n)\subseteq \mathbb{N}$.
\end{proof}

In the proof of Theorem \ref{maingeneral} we shall use the following fact which is implied by \cite[Proposition 
1.18]{dav} and Ringrose's Theorem.

\begin{lemma}\label{utility}
 Let $Q\in\mathcal{N}$, $Q=Q_-$ and $a\in\kn+\rad$. Then, $\inf\{\|QP^{\perp}aQP^{\perp}\|: P\in\mathcal{N}; P<Q\}=0$.
\end{lemma}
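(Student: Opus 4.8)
The plan is to write $a=k+r$ with $k\in\kn$ and $r\in\rad$ and to handle the two summands separately. Writing $\varphi_x(P)=\|QP^{\perp}xQP^{\perp}\|$, the key preliminary observation is that for fixed $x$ the function $\varphi_x$ is non-increasing as $P$ increases toward $Q$: if $P_1\le P\le Q$ are in $\mathcal{N}$, then the range of $Q-P$ is contained in that of $Q-P_1$, so $QP^{\perp}=QP^{\perp}\,QP_1^{\perp}$ and hence $QP^{\perp}xQP^{\perp}=QP^{\perp}\,(QP_1^{\perp}xQP_1^{\perp})\,QP^{\perp}$, giving $\varphi_x(P)\le\varphi_x(P_1)$. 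Thus each infimum over $\{P\in\mathcal{N}:P<Q\}$ is really a monotone limit as $P\uparrow Q$, and since $\varphi_a(P)\le\varphi_k(P)+\varphi_r(P)$ it suffices to prove $\inf_{P<Q}\varphi_k(P)=0$ and $\inf_{P<Q}\varphi_r(P)=0$ separately.

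The radical summand requires no work: by Ringrose's Theorem the quantity $\inf\{\|QP^{\perp}rQP^{\perp}\|:P\in\mathcal{N},\,P<Q\}$ is one of the two infima characterizing membership in $\rad$, and it vanishes for every $Q\in\mathcal{N}$ precisely because $r\in\rad$. Hence $\inf_{P<Q}\varphi_r(P)=0$.

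For the compact summand I would use the hypothesis $Q=Q_-$. By definition $Q_-=\vee\{P\in\mathcal{N}:P<Q\}$, so $Q=Q_-$ means the projections $P<Q$ increase strongly to $Q$; equivalently $QP^{\perp}=Q-P$ decreases strongly to $0$. This is where \cite[Proposition 1.18]{dav} enters: for the compact operator $k$ the strong convergence $Q-P\to0$ is upgraded to norm convergence $\|(Q-P)k\|\to0$. (The underlying reason is the standard compactness argument: were $\|(Q-P)k\|\ge\delta$ along a subnet, one could pick unit vectors $x_P$ with $\|(Q-P)kx_P\|\ge\delta$, extract a norm-convergent subnet $kx_P\to y$ using compactness of $k$, and reach a contradiction from $(Q-P)y\to0$.) Since $\varphi_k(P)=\|QP^{\perp}kQP^{\perp}\|\le\|QP^{\perp}k\|=\|(Q-P)k\|$, this yields $\inf_{P<Q}\varphi_k(P)=0$.

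Combining the two parts is then routine by the monotonicity of the first paragraph: given $\varepsilon>0$ choose $P_1,P_2<Q$ with $\varphi_k(P_1)<\varepsilon/2$ and $\varphi_r(P_2)<\varepsilon/2$, and put $P=P_1\vee P_2$, which is still below $Q$ because the nest is totally ordered; then $\varphi_a(P)\le\varphi_k(P)+\varphi_r(P)\le\varphi_k(P_1)+\varphi_r(P_2)<\varepsilon$. I expect the only genuinely delicate point to be the strong-to-norm passage for the compact part, which is exactly the content supplied by \cite[Proposition 1.18]{dav}; the rest is bookkeeping with the monotone quantities $\varphi_x$.
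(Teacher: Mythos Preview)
Your proof is correct and follows exactly the route the paper indicates: split $a=k+r$, use Ringrose's Theorem for the radical summand and \cite[Proposition~1.18]{dav} (strong-to-norm convergence against a compact operator) for the compact summand, then combine. The paper gives no further details beyond citing these two ingredients, so your argument is precisely a fleshed-out version of what the authors intend.
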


\begin{remark} \label{Q+}
 Similar statements as those of Lemmas (\ref{cont}) and (\ref{utility}) hold in the case that $Q_+=Q$.
\end{remark}

\begin{theorem} \label{maingeneral}
The hypocompact radical of $\alg$ is the ideal $\kn+$ $\rad$.
\end{theorem}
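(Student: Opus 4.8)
The plan is to prove the two inclusions $\kn+\rad\subseteq\mathcal{R}_{hc}(\alg)$ and $\mathcal{R}_{hc}(\alg)\subseteq\kn+\rad$ separately, writing $\mathcal{I}=\kn+\rad$ (a closed ideal by the standing convention). The inclusion $\kn\subseteq\mathcal{R}_{hc}(\alg)$ is immediate from the Introduction: by Vala's theorem every operator in $\kn$ is a compact element of $\alg$, and by \cite[Lemma 8.2]{bre} compact elements lie in the hypocompact radical. To absorb $\rad$ as well, I would show that $\mathcal{I}$ is itself a hypocompact Banach algebra; being a closed ideal, it then sits inside the largest hypocompact ideal $\mathcal{R}_{hc}(\alg)$. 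Concretely, for a nonzero quotient $\mathcal{I}/\mathcal{J}$ one must exhibit a nonzero compact element, which I would build from the rank one operators $e\otimes f\in\mathcal{I}$ and the compressions to the atoms of $\mathcal{N}$, using that the image of a compact element under a continuous epimorphism is again a compact element; the delicate case is a continuous nest, where $\rad$ contains noncompact operators.

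For the reverse inclusion I would pass to the quotient $\mathcal{Q}=\alg/\mathcal{I}$ with quotient map $\pi$. By property (3) of a topological radical, $\pi(\mathcal{R}_{hc}(\alg))\subseteq\mathcal{R}_{hc}(\mathcal{Q})$, so it suffices to prove $\mathcal{R}_{hc}(\mathcal{Q})=\{0\}$; and since a nonzero hypocompact ideal contains a nonzero compact element of itself, it is enough to show that $\mathcal{Q}$ has no nonzero compact element. The analytic input is that, for $Q\in\mathcal{N}$ with $Q=Q_-$, Lemma \ref{utility} yields
\[
\|\pi(x)\|_{\mathcal{Q}}\ \geq\ \inf\{\|QP^{\perp}xQP^{\perp}\|:P<Q,\ P\in\mathcal{N}\}\qquad(x\in\alg),
\]
because any $y\in\mathcal{I}$ contributes nothing to the right hand infimum.

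Now suppose $\pi(a)$ is a nonzero compact element of $\mathcal{Q}$, so $a\notin\mathcal{I}$. Using a description of $\mathcal{I}$ refining Ringrose's Theorem (membership can fail only through a noncompact atomic compression or a nonvanishing Ringrose quantity at a limit point $Q=Q_-$ or $Q=Q_+$), $a$ falls into one of two cases. If some atomic compression $P_{\alpha}aP_{\alpha}$ is noncompact, then compressing to the Calkin algebra of the atom $A_{\alpha}$, which has no nonzero compact element, already shows $\pi(a)$ is not compact. Otherwise there is a limit point, say $Q=Q_-$, and $\varepsilon>0$ with $\|QP^{\perp}aQP^{\perp}\|\geq2\varepsilon$ for all $P<Q$, and I apply Lemma \ref{cont} with $b=a$ to obtain orthonormal sequences $(e_n),(f_n)$ with $e_n\otimes f_n\in\alg$. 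Fixing a partition of $\mathbb{N}$ into infinitely many infinite sets and setting $d_m=\sum_{n}e_{k_n^{(m)}}\otimes f_{k_n^{(m)}}$, each $d_m$ has norm one, and the uniformity of Lemma \ref{cont} over arbitrary subsequences gives $\inf_{P<Q}\|QP^{\perp}a(d_m-d_{m'})aQP^{\perp}\|\geq\varepsilon^{2}$ for $m\neq m'$. With the displayed estimate this forces $\|M_{\pi(a),\pi(a)}(\pi(d_m))-M_{\pi(a),\pi(a)}(\pi(d_{m'}))\|_{\mathcal{Q}}\geq\varepsilon^{2}$, so the images under $M_{\pi(a),\pi(a)}$ of the bounded family $\{\pi(d_m)\}$ form an infinite $\varepsilon^{2}$-separated set, contradicting compactness of $M_{\pi(a),\pi(a)}$.

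The main obstacle is this last step: converting the single large compression of Lemma \ref{cont} into an infinite uniformly separated family in $\mathcal{Q}$ — which is precisely why the lemma is stated for all subsequences $(k_n)$ — and checking that the merged operators $d_m-d_{m'}$ retain the estimate (the cross terms vanish by orthogonality, so a single surviving summand suffices). A secondary difficulty is the bookkeeping that reduces membership in $\mathcal{I}$ to the limit point and atomic conditions, together with the passage of compact elements between $\mathcal{Q}$ and its closed ideals needed to upgrade ``no nonzero compact element'' to $\mathcal{R}_{hc}(\mathcal{Q})=\{0\}$.
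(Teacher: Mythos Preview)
Your outline follows the paper's strategy closely: the same two-case split (a limit projection $Q=Q_{-}$ or $Q=Q_{+}$ in $\mathfrak{Q}$ versus a noncompact atomic compression), the same use of Lemma~\ref{cont} to manufacture separated test operators, and the same reduction of the atomic case to the Calkin algebra of the atom. The partition-of-$\mathbb{N}$ device you describe is exactly what the paper uses, and your observation that the cross terms vanish so that a single surviving index gives the lower bound is correct.

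There is, however, a real gap in your framing of the reverse inclusion, and it is precisely the point you call a ``secondary difficulty''. You argue in $\mathcal{Q}=\alg/\mathcal{I}$ and aim to show that $\mathcal{Q}$ has no nonzero compact element; but a compact element of a closed ideal of $\mathcal{Q}$ is not automatically a compact element of $\mathcal{Q}$, so this does not by itself yield $\mathcal{R}_{hc}(\mathcal{Q})=\{0\}$. The paper sidesteps this entirely by working in $\mathcal{J}/\mathcal{I}$ for an \emph{arbitrary} closed ideal $\mathcal{J}\supsetneq\mathcal{I}$ and showing that $\varphi(a)$ is not compact there; then $\mathcal{J}$ cannot be hypocompact, and taking $\mathcal{J}=\mathcal{R}_{hc}(\alg)$ finishes. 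This is exactly why Lemma~\ref{cont} is applied \emph{twice} in Case~(1): the first application (with $a,a$) produces $x=\bigl(\sum h_n\otimes g_n\bigr)a\in\mathcal{J}$, and the second (with $ax,a$) produces $(e_n),(f_n)$ so that the test operators $\sum_{i\in B_n}e_i\otimes x(f_i)=x\cdot\bigl(\sum_{i\in B_n}e_i\otimes f_i\bigr)$ lie in $\mathcal{J}$, not merely in $\alg$. Your single application yields $d_m\in\alg$ only, which is enough to test compactness in $\alg/\mathcal{I}$ but not in $\mathcal{J}/\mathcal{I}$.

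For the inclusion $\mathcal{I}\subseteq\mathcal{R}_{hc}(\alg)$ the paper does not attempt to verify hypocompactness of $\mathcal{I}$ by hand as you propose; it simply invokes \cite[Theorem~3.2]{anan}, which identifies $\kn+\rad$ as the closed ideal generated by the compact elements of $\alg$, together with \cite[Lemma~8.2]{bre}. Your alternative route (exhibiting a nonzero compact element in every nonzero quotient $\mathcal{I}/\mathcal{J}$) is plausible but is left as a sketch; the continuous-nest case you single out would need a genuine argument.
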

\begin{proof}
The ideal generated by the compact elements of $\alg$ is the ideal $\kn+\rad$ \cite[Theorem 3.2]{anan} and therefore
$\kn+\rad$ $\subseteq \mathcal{R}_{hc}(\mathcal{N})$ \cite[Lemma 8.2]{bre}. Let $\mathcal{J}$ 
be an ideal of $\alg$ strictly larger that $\kn+\rad$ and $a\in\mathcal{J}\setminus (\kn+\rad)$. It suffices 
to show 
that the element $\varphi(a)\in \mathcal{J}/(\kn+\rad)$ is not compact, where $\varphi:\mathcal{J}\rightarrow 
\mathcal{J}/(\kn+\rad)$ is the quotient map. We consider the sets 
$$\mathfrak{Q}_-\!=\!\{Q\!\in\!\mathcal{N}\!:\! Q \neq \{0\}, \exists \epsilon_Q>0 : \|QP^{\perp}aQP^{\perp}\|\geq 
2\epsilon_Q \; \forall  P\!\in\!\mathcal{N}; P<Q\}$$
and
$$\mathfrak{Q}_+\!=\!\{Q\!\in\!\mathcal{N}: Q \neq H, \exists \epsilon_Q>0 : \|Q^{\perp}PaQ^{\perp}P\|\geq 
2\epsilon_Q\; \forall  P\!\in\!\mathcal{N}; P>Q\}.$$
From Ringrose's Theorem 
it 
follows that the set $\mathfrak{Q}=\mathfrak{Q}_-\cup\mathfrak{Q}_+$ is not empty. We distinguish two cases:
\begin{enumerate}
 \item Firstly, we suppose that there exists a projection $Q\in\mathfrak{Q}_-$ such that 
$Q_-=Q$. If we assume that 
there exists a $Q\in\mathfrak{Q}_+$ such that $Q=Q_+$ the proof is similar (see Remark 
(\ref{Q+})).
Applying Lemma \ref{cont} 
we obtain two orthonormal sequences $(h_n)$, $(g_n)$ such that $h_n\otimes g_n\in\alg$  
for all $n\in\mathbb{N}$ and 
$\left\|QP^{\perp}a\left(\sum_{n\in\mathbb{N}} h_n\otimes g_n\right)\right.$ 
$\left.aQP^{\perp}\right\|\geq \epsilon_Q^2$ for all 
$P<Q$. 
We set $x=\left(\sum_{n\in\mathbb{N}} h_n\otimes g_n\right) 
a\in\mathcal{J}$. Let $\epsilon>0$ be such that $2\epsilon=\min\{2\epsilon_Q, 
\epsilon_Q^2\}$. Applying again Lemma \ref{cont} to the 
operators $ax=a\left(\sum_{n\in\mathbb{N}} h_n\otimes 
g_n\right) a$ and $a$ we obtain orthonormal sequences $(e_n)$ and $(f_n)$ such that $e_n\otimes f_n\in\alg$ and 
$\|QP^{\perp}ax\left(\sum_{n\in\mathbb{N}}e_{k_n}\otimes f_{k_n}\right)aQP^{\perp}\|\geq \epsilon^2$ for 
all 
$P<Q$ and for any strictly increasing sequence $(k_n)\subseteq\mathbb{N}$. Let $(A_n)_{n\in\mathbb{N}}$ 
be a partition of $\mathbb{N}$ such that $A_n$ is an infinite set for all
$n\in\mathbb{N}$. We set $B_n=\cup_{i=1}^{n} A_i$. Note that $\|\sum_{i\in C}e_i\otimes x(f_i)\|\leq \|a\|$ for any 
subset $C$ of $\mathbb{N}$.

Now, we shall prove that the
sequence 
$$\left(\varphi\left( a\left(\sum_{i\in B_n}e_i\otimes x(f_i)\right)a\right)\right)_{n\in\mathbb{N}}\subseteq 
\mathcal{J}/(\kn+\rad)$$
does not have any Cauchy subsequence. Indeed, for any $l,m\in\mathbb{N}$ with $l> m$:
\begin{multline*}
\left\|\varphi(a)\varphi\left(\sum_{i\in B_l} e_i\otimes
x(f_i)\right)\varphi(a)\right. \\ \left.-\varphi(a)\varphi\left(\sum_{j\in B_m} e_j\otimes
x(f_j)\right)\varphi(a)\right\|_{\scriptscriptstyle\mathcal{J}/(\kn+\rad)}
\end{multline*}
\begin{eqnarray*}
\hspace{5ex}&=& \inf_{\scriptscriptstyle r\in(\kn+\rad)}\left\|a\left(\sum_{i\in B_l-B_m}e_i\otimes 
x(f_i)\right)a+r\right\| \\
&\geq& \left\|a\left(\sum_{i\in B_l-B_m}e_i\otimes x(f_i)\right)a+r_{\epsilon}\right\|-\frac{\epsilon^2}{4},
\end{eqnarray*}
for some $r_{\epsilon}\in (\kn+\rad)$.

There exists a projection $P<Q$ such that $\|QP^{\perp}r_{\epsilon}QP^{\perp}\|<\frac{\epsilon^2}{4}$ (Lemma 
\ref{utility}). Therefore,
\begin{eqnarray*}
 & &\left\|a\left(\sum_{i\in B_l-B_m}e_i\otimes x(f_i)\right)a+r_{\epsilon}\right\|-\frac{\epsilon^2}{4}\\
 &\geq& \left\|QP^{\perp}\left(a\left(\sum_{i\in B_l-B_m}e_i\otimes 
x(f_i)\right)a+r_{\epsilon}\right)QP^{\perp}\right\|-\frac{\epsilon^2}{4}\\
&\geq& \left\|QP^{\perp}a\left(\sum_{i\in B_l-B_m}e_i\otimes 
x(f_i)\right)aQP^{\perp}\right\|-\left\|QP^{\perp}r_{\epsilon}QP^{\perp}\right\|-\frac{\epsilon^2}{4}\\
&\geq& \epsilon^2-\frac{\epsilon^2}{4}-\frac{\epsilon^2}{4}=\frac{\epsilon^2}{2}.
\end{eqnarray*}
Thus, $\varphi(a)$ is a non-compact element of $\mathcal{J}/(\kn+\rad)$.

 \item Secondly, we suppose that for all $Q\in\mathfrak{Q}_-$ we have that $Q_-<Q$ and for all $Q\in\mathfrak{Q}_+$ we 
have that $Q_+>Q$. In that case, we shall consider the set $\mathfrak{Q}_-$ instead of $\mathfrak{Q}$ since 
$Q_+\in\mathfrak{Q}_-$ for all $Q\in\mathfrak{Q}_+$. Then the set 
$\mathfrak{E}_n=\{Q:\|QQ_-^{\perp}aQQ_-^{\perp}\|>\frac{1}{n}\}$ is finite for all $n\in\mathbb{N}$. Observe, that if 
the set $\mathfrak{E}_n$ was infinite for some $n\in\mathbb{N}$, then there would be a projection $Q\in\mathfrak{Q}$ 
which is an accumulation point, i.e. either $Q=Q_-$ or $Q=Q_+$ which is against our assumption. Now, we suppose that 
the operator $QQ_-^{\perp}aQQ_-^{\perp}$ is compact for all $Q\in\mathfrak{Q}_-$ and we shall arrive at a 
contradiction. Indeed, since  $\mathfrak{E}_n$ is finite for all $n\in\mathbb{N}$, the series 
$\sum_{Q\in\mathfrak{Q}_-}QQ_-^{\perp}aQQ_-^{\perp}$  is norm convergent and therefore its limit 
belongs to $\kn$. It follows that $a-\sum_{Q\in\mathfrak{Q}_-}QQ_-^{\perp}aQQ_-^{\perp}\in\rad$ and therefore, 
$a\in\kn+\rad$ which is a contradiction. We thus have that there exists a $Q\in\mathfrak{Q}_-$ such that the operator 
$a_Q=QQ_-^{\perp}aQQ_-^{\perp}$ is not compact. It follows that  $\mathcal{B}(QQ_-^{\perp})\subseteq \mathcal{J}$.  We 
define the map:
$$i:\mathcal{B}(QQ_-^{\perp})/\mathcal{K}(QQ_-^{\perp})\rightarrow \mathcal{B}(QQ_-^{\perp})/(\kn+\rad)$$
$$x+\mathcal{K}(QQ_-^{\perp})\mapsto x+\kn+\rad.$$
This map is obviously well defined. 
Now, we see that the map $i$ is an isometric isomorphism. Indeed,
\begin{eqnarray*}
\|x+\mathcal{K}(QQ_-^{\perp})\|_{{\scriptscriptstyle \mathcal{B}(QQ_-^{\perp})/\mathcal{K}(QQ_-^{\perp})}} 
\hspace{-0.3cm}&=& 
\inf\{\|x+K\|:K\in\mathcal{K}(QQ_-^{\perp})\}\\
&=& \inf_{\mathclap{\substack{\scriptscriptstyle K\in\kn\\\scriptscriptstyle 
R\in\rad}}}\|QQ_-^{\perp}(x+K+R)QQ_-^{\perp}\|\\
&\leq& \inf_{\mathclap{\substack{\scriptscriptstyle K\in\kn\\\scriptscriptstyle R\in\rad}}}\|x+K+R\|\\
&=&\|x\!+\!\kn\!+\!\rad\|_{\scriptscriptstyle\mathcal{B}(QQ_-^{\perp})/(\kn\!+\!\rad)}
\end{eqnarray*}
and the opposite inequality is immediate since $\mathcal{K}(QQ_-^{\perp})\subseteq \kn$ $+\rad$. 
If $\varphi(a)$ is a compact element of $\mathcal{J}/(\kn\!+\!\rad\!)$, then $\varphi(a_Q)$ is a compact element of 
$\mathcal{B}(QQ_-^{\perp})/(\kn+\rad)$. Since   $i(a_Q+\mathcal{K}(QQ_-^{\perp}))=\varphi(a_Q)$ it follows from above 
that 
 $a_Q+\mathcal{K}(QQ_-^{\perp})$ is a compact element of $\mathcal{B}(QQ_-^{\perp})/\mathcal{K}(QQ_-^{\perp})$. 
From \cite{fs} we know that
 $\mathcal{B}(QQ_-^{\perp})/\mathcal{K}(QQ_-^{\perp})$ has no compact elements.
Hence, $\phi(a)$ is not a compact element of $\mathcal{J}/(\kn+\rad)$.

\end{enumerate}
\end{proof}

\begin{remark}
The hypocompact radical of $\alg$ coincides with the ideal generated by the compact elements of $\alg$.
\end{remark}

The following definitions and results are taken from  \cite{shu5}. An element $a$ of a Banach  
algebra $\mathcal{A}$ is said to be  finite rank if the map $M_{a,a}:\mathcal{A}\rightarrow\mathcal{A}$, $x\mapsto axa$ is 
 finite rank. A  Banach algebra 
$\mathcal{A}$ is called  hypofinite if any nonzero quotient $\mathcal{A}/\mathcal{J}$ by a closed ideal 
$\mathcal{J}$ contains a nonzero
 finite rank element.   A Banach 
 algebra $\mathcal{A}$ has a largest hypofinite
ideal  which is denoted by $\mathcal{R}_{hf}(\mathcal{A})$ and the map $\mathcal{A}\to 
\mathcal{R}_{hf}(\mathcal{A})$ is a topological radical \cite[2.3.6]{shu5}. The ideal 
 $\mathcal{R}_{hf}(\mathcal{A})$ is called the 
 hypofinite radical of $\mathcal{A}$.
A Banach  
  algebra  
 is called \textit{scattered} if the
spectrum of every element $a\in \mathcal{A}$ is finite or countable.  A   Banach algebra 
$\mathcal{A}$ has a largest scattered ideal denoted by $\mathcal{R}_{sc}(\mathcal{A})$  and the map $\mathcal{A}\to 
\mathcal{R}_{sc}(\mathcal{A})$ is a topological radical as well \cite[Theorems 8.10, 8.11]{shu5}. The ideal 
 $\mathcal{R}_{sc}(\mathcal{A})$ is called the 
 scattered radical of $\mathcal{A}$.

\begin{corollary} \label{sc}
 $\mathcal{R}_{hf}(\alg)=\mathcal{R}_{hc}(\alg)=\mathcal{R}_{sc}(\alg)$.
\end{corollary}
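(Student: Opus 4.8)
The plan is to squeeze the three radicals between the two outer ones. For every Banach algebra one has the chain $\mathcal{R}_{hf}(\alg)\subseteq\mathcal{R}_{hc}(\alg)\subseteq\mathcal{R}_{sc}(\alg)$ \cite{shu5} (finite rank elements are compact elements, and hypocompact algebras are scattered), while Theorem \ref{maingeneral} gives $\mathcal{R}_{hc}(\alg)=\kn+\rad$. Hence it suffices to prove the two closing inclusions $\kn+\rad\subseteq\mathcal{R}_{hf}(\alg)$ and $\mathcal{R}_{sc}(\alg)\subseteq\kn+\rad$, after which all three radicals coincide with $\kn+\rad$.

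For the inclusion $\kn+\rad\subseteq\mathcal{R}_{hf}(\alg)$ I would show that $\kn+\rad$ is the closed ideal generated by the finite rank operators of $\alg$. First, a finite rank operator is a finite rank element of the algebra: if $u=\sum_{i=1}^{k}v_i\otimes w_i$ then $M_{u,u}(x)=\sum_{i,j}\langle xw_i,v_j\rangle\,(v_i\otimes w_j)$, so $M_{u,u}$ has rank at most $k^{2}$. By the finite rank analogue of \cite[Lemma 8.2]{bre} each such $u$ lies in $\mathcal{R}_{hf}(\alg)$, hence so does the closed ideal $\mathcal{F}$ they generate. That $\mathcal{F}=\kn+\rad$ I would obtain from the norm density of the finite rank operators in $\kn$ together with the generation argument of \cite[Theorem 3.2]{anan}, which already produces the radical part from finite rank operators; granting this, $\kn+\rad=\mathcal{F}\subseteq\mathcal{R}_{hf}(\alg)$.

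For $\mathcal{R}_{sc}(\alg)\subseteq\kn+\rad$ I would argue by contradiction, re-running the dichotomy in the proof of Theorem \ref{maingeneral} with $\mathcal{J}=\mathcal{R}_{sc}(\alg)$ and some $a\in\mathcal{R}_{sc}(\alg)\setminus(\kn+\rad)$. In the second case of that proof one reaches an atom $p=QQ_-^{\perp}$ with $pH$ infinite dimensional and $p\,\alg\,p=\mathcal{B}(pH)\subseteq\mathcal{R}_{sc}(\alg)$. Taking a self-adjoint $x\in\mathcal{B}(pH)$ with $\sigma(x)=[0,1]$ and extending it by $0$ on $pH^{\perp}$, one checks, using $p,\,I-p\in\alg$ and $p\,\mathcal{B}(H)\,p\subseteq\alg$, that $(x-\lambda)^{-1}\in\alg$ for every $\lambda\notin[0,1]$, so $\sigma_{\alg}(x)=[0,1]$. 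Since $x$ belongs to the ideal $\mathcal{R}_{sc}(\alg)$ and the spectrum relative to a closed ideal agrees with the spectrum in the whole algebra up to the point $0$, we get $\sigma_{\mathcal{R}_{sc}(\alg)}(x)\supseteq[0,1]\setminus\{0\}$, an uncountable set, contradicting the scatteredness of $\mathcal{R}_{sc}(\alg)$.

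The main obstacle is the first case, where some $Q\in\mathfrak{Q}$ is a limit point of the nest ($Q=Q_-$, or symmetrically $Q=Q_+$); this is the genuinely continuous situation, in which there is no infinite atom to host a non-scattered operator and in which $\alg/(\kn+\rad)$ behaves like $L^{\infty}$ of a non-atomic measure. Here producing a non-scattered element inside the ideal $\mathcal{R}_{sc}(\alg)$, rather than merely inside $\alg$, is the delicate point. I would attack it through the interval compressions $\Phi_R\colon\alg\to\mathrm{Alg}\,\mathcal{N}_R$, $T\mapsto QR^{\perp}TQR^{\perp}$, where $\mathcal{N}_R=\{N-R:R\le N\le Q,\ N\in\mathcal{N}\}$ is a nest on the infinite dimensional space $QR^{\perp}$; these are continuous epimorphisms, and by Lemma \ref{utility} none of them can send $a$ into $\kn+\rad$. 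Passing to the limit $R\uparrow Q$ along mutually orthogonal atoms $A_n=R_{n+1}-R_n$ of the diagonal $\alg\cap\alg^{*}$ and exploiting the persistence of $a$ on each block, the aim is to extract from $\mathcal{R}_{sc}(\alg)$ a self-adjoint diagonal element whose spectrum is an interval. Equivalently, and perhaps more cleanly, it suffices to prove $\mathcal{R}_{sc}(\alg/(\kn+\rad))=0$ by showing that the image of any $a\notin\kn+\rad$ generates a non-scattered ideal of the quotient; this is where I expect essentially all the work of the corollary to concentrate.
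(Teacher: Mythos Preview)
Your plan has a genuine gap in the continuous case of the inclusion $\mathcal{R}_{sc}(\alg)\subseteq\kn+\rad$. You flag it yourself: when $Q=Q_-$ (or $Q=Q_+$) you only outline an approach via interval compressions and diagonal elements, but never actually exhibit an element of $\mathcal{R}_{sc}(\alg)$ with uncountable spectrum; the sentence ``this is where I expect essentially all the work of the corollary to concentrate'' is an admission, not an argument. Your atomic case (case 2) is fine, and your inclusion $\kn+\rad\subseteq\mathcal{R}_{hf}(\alg)$ is correct in substance, though the appeal to the proof of \cite[Theorem 3.2]{anan} for the radical part is hand-wavy.

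The paper sidesteps the hard case entirely and takes a much shorter route. Rather than attack $\mathcal{R}_{sc}(\alg)\subseteq\kn+\rad$ directly, it proves $\rad\subseteq\mathcal{R}_{hf}(\alg)$ by a one-line observation: for each $P\in\mathcal{N}$ the ideal $\mathcal{J}_P=\{a\in\alg:a=PaP^{\perp}\}$ has trivial multiplication, so every $a\in\mathcal{J}_P$ satisfies $M_{a,a}=0$ and is thus a finite rank element; hence $\mathcal{J}_P\subseteq\mathcal{R}_{hf}(\alg)$, and since $\rad$ is the closed linear span of $\bigcup_{P\in\mathcal{N}}\mathcal{J}_P$ by \cite[Theorem 5.4]{ring}, one gets $\rad\subseteq\mathcal{R}_{hf}(\alg)$. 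The equality of all three radicals then follows from the general result \cite[Theorem 8.15]{shu5} together with Theorem \ref{maingeneral}. In particular, the continuous-nest spectral analysis you were dreading is unnecessary once one invokes the Shulman--Turovskii machinery; the corollary is meant to be a two-line consequence of existing general theory, not a reprise of the dichotomy in Theorem \ref{maingeneral}.
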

\begin{proof}
 For $P \in \mathcal{N}$  denote by $\mathcal{J}_P$ the ideal  $$\mathcal{J}_P=\{a \in \alg: a=PaP^{\perp}\}$$ of $\alg$. 
 Since $\mathcal{J}_P$   has trivial multiplication, it is a hypofinite ideal and  is contained in the 
 hypofinite radical of $\alg$. It follows from  \cite[Theorem 5.4]{ring} that 
 the Jacobson radical of $\alg$ is the closure of the linear span of the set $\cup_{P \in \mathcal{N}} \mathcal{J}_P$, 
 and hence it is contained in
 the hypofinite radical of $\alg$. 
 The Corollary now follows from   \cite[Theorem 8.15]{shu5} and Theorem \ref{maingeneral}.
\end{proof}

\begin{corollary}
 The  hypocompact radical of $\alg/\kn$ coincides with its scattered radical which in turn is equal to the ideal 
$(\rad$ $+\kn)/\kn$.
\end{corollary}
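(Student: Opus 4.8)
The plan is to reduce the whole corollary to a single general observation about how \emph{any} topological radical behaves under a quotient by a closed ideal contained in it, and then to apply that observation twice: once to $\mathcal{R}_{hc}$ and once to $\mathcal{R}_{sc}$, feeding in the identifications already obtained in Theorem \ref{maingeneral} and Corollary \ref{sc}.

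First I would record the general fact: if $\mathcal{R}$ is a topological radical, $\mathcal{A}$ a Banach algebra and $\mathcal{I}$ a closed ideal of $\mathcal{A}$ with $\mathcal{I}\subseteq\mathcal{R}(\mathcal{A})$, then $\mathcal{R}(\mathcal{A}/\mathcal{I})=\mathcal{R}(\mathcal{A})/\mathcal{I}$. This follows directly from Dixon's axioms. Writing $\pi:\mathcal{A}\to\mathcal{A}/\mathcal{I}$ for the quotient map, axiom (3) gives $\pi(\mathcal{R}(\mathcal{A}))\subseteq\mathcal{R}(\mathcal{A}/\mathcal{I})$, and since $\mathcal{I}\subseteq\mathcal{R}(\mathcal{A})$ the left-hand side is exactly $\mathcal{R}(\mathcal{A})/\mathcal{I}$; this yields one inclusion. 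For the reverse inclusion I would compose with the further quotient $\psi:\mathcal{A}/\mathcal{I}\to(\mathcal{A}/\mathcal{I})/(\mathcal{R}(\mathcal{A})/\mathcal{I})\cong\mathcal{A}/\mathcal{R}(\mathcal{A})$; axiom (3) applied to $\psi$, together with axiom (2), forces $\psi(\mathcal{R}(\mathcal{A}/\mathcal{I}))\subseteq\mathcal{R}(\mathcal{A}/\mathcal{R}(\mathcal{A}))=\{0\}$, whence $\mathcal{R}(\mathcal{A}/\mathcal{I})\subseteq\ker\psi=\mathcal{R}(\mathcal{A})/\mathcal{I}$.

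Next I would apply this with $\mathcal{A}=\alg$ and $\mathcal{I}=\kn$. Since $\kn$ is a closed ideal of $\alg$ and $\kn\subseteq\kn+\rad$, and since Theorem \ref{maingeneral} identifies $\mathcal{R}_{hc}(\alg)=\kn+\rad$, the hypothesis $\mathcal{I}\subseteq\mathcal{R}_{hc}(\alg)$ is satisfied. The general fact then gives $\mathcal{R}_{hc}(\alg/\kn)=(\kn+\rad)/\kn=(\rad+\kn)/\kn$. The identical argument with $\mathcal{R}_{sc}$ in place of $\mathcal{R}_{hc}$ goes through because Corollary \ref{sc} shows $\mathcal{R}_{sc}(\alg)=\kn+\rad$ as well, so again $\kn\subseteq\mathcal{R}_{sc}(\alg)$ and $\mathcal{R}_{sc}(\alg/\kn)=(\rad+\kn)/\kn$. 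Comparing the two computations yields the asserted coincidence of the two radicals, both being equal to $(\rad+\kn)/\kn$.

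There is essentially no serious obstacle here: the substance lives entirely in Theorem \ref{maingeneral} and Corollary \ref{sc}, and the present corollary is just the formal push-down of those identities through the quotient by $\kn$. The only points needing a little care are verifying that $\kn$ genuinely lies inside each radical, which is immediate from $\kn\subseteq\kn+\rad$, and that $\kn$ is closed, which is part of the standing conventions of the paper; both are what license the quotient formula $\mathcal{R}(\mathcal{A}/\mathcal{I})=\mathcal{R}(\mathcal{A})/\mathcal{I}$.
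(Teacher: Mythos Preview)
Your proof is correct and follows essentially the same route as the paper: both reduce the statement to the quotient formula $\mathcal{R}(\mathcal{A}/\mathcal{I})=\mathcal{R}(\mathcal{A})/\mathcal{I}$ for a closed ideal $\mathcal{I}\subseteq\mathcal{R}(\mathcal{A})$, and then feed in the identifications from Theorem \ref{maingeneral} and Corollary \ref{sc}. The only difference is that the paper obtains this quotient behavior by citing \cite[Corollary 3.9]{shu4} for $\mathcal{R}_{hc}$ and \cite[Corollary 8.13]{shu5} for $\mathcal{R}_{sc}$, whereas you derive the general formula directly from Dixon's axioms (2) and (3), which makes your argument a bit more self-contained.
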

\begin{proof}
 It follows from  \cite[Corollary 3.9]{shu4} and Theorem \ref{maingeneral} that  $(\rad$ $+\kn)/\kn$  
 is the hypocompact radical of $\alg/\kn$. 
 
 It follows from   
\cite[Corollary 8.13]{shu5}, Theorem \ref{maingeneral} and 
Corollary \ref{sc} that the scattered radical of $\alg/\kn$   is $(\rad+\kn)/\kn$.

\end{proof}

\bibliographystyle{amsplain}

\begin{thebibliography}{20}
\bibitem{anan} G. Andreolas and M. Anoussis, \textit{Compact multiplication operators on nest algebras},  J. Operator 
Theory, to 
appear.
\bibitem{ah}
 S. Argyros and R. G.  Haydon, \textit{ A hereditarily indecomposable $\mathscr{L}_{\infty}$-space that solves the scalar-plus-compact problem},
Acta Math. \textbf{206} (2011), no. 1, 1--54. MR2784662


\bibitem{bre} M. Bre\v{s}ar and Yu. V. Turovskii, \textit{Compactness conditions for elementary operators},  Studia Math. \textbf{178}
(2007), no. 1, 1--18. MR2282487
\bibitem{dav} K. Davidson, \textit{Nest algebras,} Longman Scientific \& 
Technical, (1988).
\bibitem{dix} P. G. Dixon, \textit{Topologically irreducible representations and radicals in Banach algebras},  Proc. 
London Math. Soc. (3) \textbf{74} (1997), no. 1, 174--200. MR1416730
\bibitem{fs} C. K. Fong and A. R. Sourour, \textit{On the operator indentity $\sum A_kXB_k\equiv 0$},
 Can. J. Math. \textbf{31} (1979), 845--857. MR0540912



\bibitem{ring} J. R. Ringrose, \textit{On some algebras of operators},  Proc. London Math. Soc. (3) \textbf{15} (1965), 
61--83. MR0171174 








\bibitem{shu0} V. S. Shulman and Yu. V. Turovskii, \textit{Radicals in Banach Algebras and Some Problems
in the Theory of Radical Banach Algebras},  Funct. Anal. Appl. \textbf{35} (2001), no. 4, 312--314. MR1879123

\bibitem{shu1}
V. S.   Shulman and Yu. V.  Turovskii, \textit{
Topological radicals. I. Basic properties, tensor products and joint quasinilpotence},
Topological algebras, their applications, and related topics, 293--333, Banach Center Publ., 67, Polish Acad. Sci., Warsaw, 2005. MR2144095

\bibitem{shu2}
 V. S.  Shulman and Yu. V.  Turovskii,\textit{
Topological radicals, II. Applications to spectral theory of multiplication operators}. Elementary 
operators and their applications, 45--114, Oper. Theory Adv. Appl., 212, Birkh\"{a}user/Springer Basel AG, Basel, 2011. MR2789133



\bibitem{shu3}
E.  Kissin, V. S.  Shulman and Yu. V.  Turovskii,  \textit{
Topological radicals and Frattini theory of Banach Lie algebras}, Integral Equations Operator Theory \textbf{74} (2012), no. 1, 51--121. 
 MR2969042

\bibitem{shu4} V. S. Shulman and Yu. V. Turovskii, \textit{Topological Radicals and Joint Spectral Radius}, Funktsional.
Anal. i Prilozhen. \textbf{46} (2012), no. 4, 61--82; translation in
Funct. Anal. Appl. \textbf{46} (2012), no. 4, 287--304. 
MR3075096
\bibitem{shu5} V. S. Shulman and Yu. V. Turovskii, \textit{Topological radicals, V. From algebra to spectral theory}, 
Algebraic methods in functional analysis, 171--280,
Oper. Theory Adv. Appl., 233, Birkhauser/Springer, Basel, (2014). MR3203990
\bibitem{1964} K. Vala, \textit{On compact sets of compact operators}, Ann. Acad. Sci.
Fenn. Ser. A I no. \textbf{351} (1964),  MR0169078
\end{thebibliography}

\end{document}